\newtheorem{theorem}{Theorem}
\newtheorem{lemma}{Lemma}
\newtheorem{corollary}{Corollary}
\theoremstyle{remark}
\newtheorem{remark}{Remark}
\newcommand{\C}{\mathbb{C}}
\newcommand{\zb}{\overline{z}}
\newcommand{\D}{\Omega}
\newcommand{\ep}{\varepsilon}
\newcommand{\Dc}{\overline{\Omega}}
\newcommand{\dbar}{\overline{\partial}}
\newcommand{\sumprime}{\sideset{}{'}\sum}
\DeclareMathOperator{\ind}{ind}
\title{A Sufficient condition for compactness of Hankel operators}
\author{Mehmet \c{C}el\.ik}
\address[Mehmet \c{C}elik]{Texas A\&M University-Commerce, 
	Department of Mathematics, Commerce, TX 75429, USA}
\email{mehmet.celik@tamuc.edu}
\author{S\"{o}nmez \c{S}ahuto\u{g}lu}
\address[S\"{o}nmez \c{S}ahuto\u{g}lu]{University of Toledo, 
	Department of Mathematics \& Statistics, Toledo, OH 43606, USA}
\email{sonmez.sahutoglu@utoledo.edu}
\author{Emil J. Straube}
\address[Emil J. Straube]{Texas A\&M University, Department of Mathematics, 
	College Station, TX, 77843, USA}
\email{straube@math.tamu.edu}
\subjclass[2010]{Primary 47B35; Secondary 32W05}
\keywords{Hankel operators, convex domains, compactness, Fredholm Toeplitz operators}
\date{\today}
\begin{document}

\begin{abstract}
Let $\Omega$ be a bounded convex domain in $\mathbb{C}^{n}$. We show that if 
$\varphi \in C^{1}(\overline{\Omega})$ is holomorphic along analytic varieties in 
$b\Omega$, then $H^{q}_{\varphi}$, the Hankel operator with symbol $\varphi$, 
is compact. We have shown the converse earlier (\cite{CelikSahutogluStraube20b}), 
so that we obtain a characterization of compactness of these operators in terms of 
the behavior of the symbol relative to analytic structure in the boundary. A corollary 
is that Toeplitz operators with these nonvanishing symbols are Fredholm 
(of index zero).
\end{abstract}

\maketitle

%%%%%%%%%%%%%%%%%%%%%%%%%%%%%%%%%%%%%%%%%%
\section{Introduction}\label{intro}

This paper is a follow up to our recent \cite{CelikSahutogluStraube20b}. There we 
proved the following necessary condition for compactness of a Hankel operator on 
a bounded convex domain $\Omega$ in $\mathbb{C}^{n}$ 
(\cite[Theorem 1]{CelikSahutogluStraube20b}). Let $n\geq 2$, $0\leq q\leq (n-1)$ and 
suppose $\varphi\in C(\overline{\Omega})$. If the Hankel operator $H^{q}_{\varphi}$ 
on $(0,q)$--forms is compact, then the symbol $\varphi$ is holomorphic along complex 
varieties of dimension $(q+1)$ (and higher) in the boundary (such varieties are actually 
affine and given by the intersection of $\overline{\Omega}$ with affine subspaces, 
see the discussion in \cite{CelikSahutogluStraube20b}). We also proved a rather partial 
converse: if the boundary contains only `finitely many' varieties, then the converse holds 
(\cite[Theorem 2]{CelikSahutogluStraube20b}). That is, in this case, if the symbol is 
holomorphic along the varieties, then the Hankel operator is compact. We conjectured 
that this restriction was an artifact of the proof. This is indeed the case: the main purpose 
of this paper is to prove the full converse (via a different method), with one small caveat: 
we need the symbol to be in $C^{1}(\overline{\Omega})$.

We first recall some notation and terminology from \cite{CelikSahutogluStraube20b}. 
The first section of that paper also contains an introduction to the subject and a discussion 
of previous work to which we refer the reader. For a bounded domain $\Omega$, we denote 
by $K^{2}_{(0,q)}(\Omega)$ the space of square integrable $\overline{\partial}$--closed 
$(0,q)$--forms on $\Omega$; its subspace of forms with holomorphic coefficients is 
denoted by $A^{2}_{(0,q)}(\Omega)$. The operator $P_{q}$ denotes the Bergman 
projection on $(0,q)$--forms, i.e. the orthogonal projection 
$P_{q}: L^{2}_{(0,q)}(\Omega) \rightarrow K^{2}_{(0,q)}(\Omega)$. For a symbol 
$\varphi\in L^{\infty}(\Omega)$, the associated Hankel operator 
$H^{q}_{\varphi}: K^{2}_{(0,q)}(\Omega) \rightarrow L^{2}_{(0,q)}(\Omega)$ is defined as 
\begin{equation}\label{definition}
H^{q}_{\varphi}f = \varphi f - P_{q}(\varphi f)
\end{equation}
for $f\in K^{2}_{(0,q)}(\Omega)$. 

\begin{theorem}\label{THEOREM1}
Let $\D$ be a bounded convex domain in $\C^n, \varphi\in C^1(\Dc)$, and 
$0\leq q\leq n-1$. Assume that $\varphi$ is holomorphic along the varieties in 
the boundary of dimension $q+1$ (and higher). Then $H^q_{\varphi}$ is compact 
on $K^2_{(0,q)}(\D)$.  
\end{theorem}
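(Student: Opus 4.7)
The starting point is the identity
\[
H^q_\varphi f \;=\; \dbar^* N_{q+1}\bigl(\dbar\varphi\wedge f\bigr), \qquad f\in K^2_{(0,q)}(\D),
\]
valid because $\D$ is (pseudo)convex and so the $\dbar$--Neumann operator $N_{q+1}$ on $(0,q+1)$--forms exists and is bounded. Since $\varphi\in C^1(\Dc)$, the coefficient $\dbar\varphi$ is a continuous $(0,1)$--form on $\Dc$, and the theorem reduces to showing that $T\colon f\mapsto \dbar^*N_{q+1}(\dbar\varphi\wedge f)$ is compact. My plan is to invoke the standard ``little--$o$'' criterion: exhibit, for every $\ep>0$, a splitting $T=T^c_\ep+R_\ep$ with $T^c_\ep$ compact and $\|R_\ep\|\le C\ep$; letting $\ep\downarrow 0$ then gives $T$ as a norm--limit of compact operators.

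For the decomposition, let $\mathcal V\subset b\D$ denote the (closed) union of all affine varieties of dimension $\ge q+1$ contained in $b\D$. The hypothesis says $\dbar\varphi$ annihilates $T^{0,1}V$ pointwise at every $p\in V\subset\mathcal V$; combined with continuity of $\dbar\varphi$, this yields the quantitative statement that, for every $\ep>0$, there is an open neighborhood $U_\ep$ of $\mathcal V$ in $\Dc$ on which the ``variety--tangential'' part of $\dbar\varphi$ has sup--norm at most $\ep$. Pick a smooth cutoff $\chi_\ep$ supported in $U_\ep$ and equal to $1$ on a smaller neighborhood of $\mathcal V$, and split $\dbar\varphi=\chi_\ep\dbar\varphi+(1-\chi_\ep)\dbar\varphi$. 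For the off--variety piece $(1-\chi_\ep)\dbar\varphi$: it is supported in a compact subset of $\Dc\setminus\mathcal V$ where $b\D$ carries no variety of dimension $\ge q+1$, so compactness--multiplier techniques for convex domains (applied locally there, where the obstruction set has been removed) give compactness of $T^c_\ep$. For the near--variety piece $\chi_\ep\dbar\varphi$: the $\ep$--smallness of its variety--tangential part, together with $L^2$ boundedness of $\dbar^*N_{q+1}$, should deliver $\|R_\ep\|\le C\ep$.

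The technical heart of the argument, and where I expect the main obstacle, is the near--variety estimate. Pointwise vanishing of $\dbar\varphi$ on $\mathcal V$ controls only the components tangent to the variety through each point, while the ``variety--normal'' components may be arbitrary; and $\mathcal V$ can be a non--discrete union of varieties of varying dimensions, so there is no obvious global frame in which the tangential/normal decomposition is uniform. The $C^1$ regularity of $\varphi$ enters here, upgrading pointwise vanishing to a uniform neighborhood bound that a merely continuous symbol could not supply. A secondary difficulty is extracting compactness of $T^c_\ep$ from the local geometry of $b\D\setminus\mathcal V$ \emph{without} invoking the ``finitely many varieties'' assumption of \cite[Theorem 2]{CelikSahutogluStraube20b}; this is presumably where the authors' promised different method enters, replacing the variety--by--variety analysis of the earlier paper by a global construction valid once the symbol has been cut away from $\mathcal V$.
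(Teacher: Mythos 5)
Your reduction via Kohn's formula, $\|H^q_\varphi f\|^2=\langle N_{q+1}(\dbar\varphi\wedge f),\dbar\varphi\wedge f\rangle$, matches the paper's starting point, but the two steps you yourself flag as the ``technical heart'' are genuine gaps, not merely technical debts. First, the near--variety estimate fails as stated: on $U_\ep$ only the components of $\dbar\varphi$ \emph{along} the varieties are small, while the transverse components are of the order of $\|\dbar\varphi\|_{\infty}$. Hence $\|\chi_\ep\dbar\varphi\wedge f\|$ is not $O(\ep)\|f\|$, and $L^2$--boundedness of $\dbar^* N_{q+1}$ gives you nothing better than a fixed constant for $\|R_\ep\|$. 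What is small is only a \emph{directional} part of the datum, and exploiting directional smallness requires an anisotropic mechanism (good estimates in the complementary directions), which your scheme does not supply. There are further problems with the setup itself: the union $\mathcal V$ of all varieties of dimension $\ge q+1$ need not be closed (discs in $b\D$ can shrink as one approaches a limit point), the ``variety--tangential part'' of $\dbar\varphi$ has no canonical meaning off $\mathcal V$ and no uniform tangential/normal splitting exists when varieties of different dimensions accumulate, and the asserted uniform neighborhood bound needs a compactness argument that you do not give. Second, compactness of $T^c_\ep$ does not follow from ``compactness--multiplier techniques applied locally'': cutting the symbol off near $\mathcal V$ does not remove the varieties from $b\D$, $N_{q+1}$ itself remains non--compact, and compactness in the $\dbar$--Neumann problem is not known to localize in the way you invoke. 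Making such a localization work in the presence of only finitely many varieties is exactly \cite[Theorem 2]{CelikSahutogluStraube20b}; the general case is precisely what was open, so this step assumes the theorem's main difficulty.

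For comparison, the paper avoids any cutoff near $\mathcal V$ and any localization. It works with the Bergman potential $\lambda(z)=\log B(z,z)$, which has self--bounded complex gradient on convex domains (Zimmer, via Frankel's comparison \eqref{comparison} of the Bergman metric with the $|v|/\delta_\D(z,v)$ metric), and proves Lemma \ref{LEMMA2}: near the boundary, if a sum of $(q+1)$ eigenvalues of the Bergman metric is $\le\ep^{-1}$, then the corresponding eigenvectors span (after passing to a limit and using the maximum principle for a plurisubharmonic defining function) an affine $(q+1)$--dimensional variety in $b\D$, so $\dbar\varphi$ is uniformly small in those directions. The proof then runs the McNeal--Straube weighted Morrey--Kohn--H\"ormander argument with the dilated potentials $\lambda_\ep$ and the weighted Bergman projection trick \eqref{EQ33}: at each point either the relevant eigenvalue sum is large, which controls the component $(e^{-\lambda_\ep/2}u_\ep)_J$ via \eqref{crucialestimate}, or it is small, in which case Lemma \ref{LEMMA2} makes $(\dbar\varphi\wedge f)_J$ small. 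This pointwise dichotomy in the Bergman--metric eigenframe is exactly the anisotropic estimate your outline is missing, and it yields the compactness estimates \eqref{compest} globally, with the compact operator coming only from restriction to a relatively compact subdomain.
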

Combining Theorem \ref{THEOREM1} with \cite[Theorem 1]{CelikSahutogluStraube20b}, 
we have a complete characterization of compactness of $H^{q}_{\varphi}$ for symbols 
in $C^{1}(\overline{\Omega})$.
\begin{corollary}\label{Cor1}
Let $\D$ be a bounded convex domain in $\C^n, \varphi\in C^1(\Dc)$, and 
$0\leq q\leq n-1$. Then $H^{q}_{\varphi}$ is compact if and only if $\varphi$ is holomorphic 
along complex varieties of dimension $(q+1)$ (and higher) in the boundary.
\end{corollary}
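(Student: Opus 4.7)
The plan is to derive Corollary \ref{Cor1} directly by combining Theorem \ref{THEOREM1} above with the necessity result from the authors' previous work, since the corollary is explicitly stated as the conjunction of a sufficient and a necessary condition for compactness. The ``if'' direction of Corollary \ref{Cor1} is exactly Theorem \ref{THEOREM1}: it asserts that whenever $\varphi \in C^1(\Dc)$ is holomorphic along the $(q+1)$-dimensional (and higher) varieties in $b\D$, the Hankel operator $H^q_{\varphi}$ on $K^2_{(0,q)}(\D)$ is compact. The ``only if'' direction is the content of \cite[Theorem 1]{CelikSahutogluStraube20b}, quoted in the introduction above, which establishes exactly the needed necessity: on a bounded convex domain in $\C^n$, compactness of $H^q_{\varphi}$ for $\varphi \in C(\Dc)$ already forces $\varphi$ to be holomorphic along complex varieties of dimension $(q+1)$ and higher in the boundary.

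To apply these two inputs in the common setting of the corollary, I would only note the trivial inclusion $C^1(\Dc) \subset C(\Dc)$, so that the $C^1$ hypothesis of Corollary \ref{Cor1} is strictly stronger than the continuity hypothesis needed to invoke \cite[Theorem 1]{CelikSahutogluStraube20b} for necessity. Conversely, $C^1$ regularity is precisely what Theorem \ref{THEOREM1} requires for sufficiency, so the two theorems dovetail on the common class $C^1(\Dc)$. The ranges $0\leq q\leq n-1$ and the convexity assumption on $\D$ appear identically in both inputs, so no matching or restriction is needed on those parameters.

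There is essentially no serious obstacle, since Corollary \ref{Cor1} is by design a formal consequence of the two theorems. The only cosmetic point worth noting is that the characterization has nontrivial content only for $n \geq 2$ (the range treated in \cite{CelikSahutogluStraube20b}); in the degenerate case $n = 1$ the boundary of a bounded convex domain in $\C$ contains no complex varieties of positive dimension, so the condition on $\varphi$ becomes vacuous and the equivalence reduces to the classical statement that $H^0_{\varphi}$ is compact for every $\varphi \in C^1(\Dc)$ on such a domain, which can be recovered from Theorem \ref{THEOREM1} as well. Thus the proof of Corollary \ref{Cor1} amounts to citing Theorem \ref{THEOREM1} and \cite[Theorem 1]{CelikSahutogluStraube20b} in succession.
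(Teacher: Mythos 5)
Your proposal is correct and coincides with the paper's own (one-line) argument: the corollary is obtained precisely by combining Theorem \ref{THEOREM1} for sufficiency with \cite[Theorem 1]{CelikSahutogluStraube20b} for necessity, using $C^1(\Dc)\subset C(\Dc)$. Your remark about the vacuous case $n=1$ is a harmless extra observation and does not change the argument.
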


\begin{remark}
Theorem 1 in \cite{CelikSahutogluStraube20b} only assumes that the Hankel operator is 
compact on $A^{2}_{(0,q)}(\Omega)$. Thus, on a bounded convex domain, if the symbol 
$\varphi$ is in $C^{1}(\overline{\Omega})$, then $H^{q}_{\varphi}$ is compact on 
$K^{2}_{(0,q)}(\Omega)$ if and only if it is compact on $A^{2}_{(0,q)}(\Omega)$. The 
analogous fact holds for the $\overline{\partial}$--Neumann operator, see \cite{FuStraube98}. 
For related questions, see \cite[Theorem 12.12]{HaslingerBook}.
\end{remark}

\begin{remark}
The converse of Theorem \ref{THEOREM1}, \cite[Theorem 1]{CelikSahutogluStraube20b}, 
holds for $\varphi$ only assumed in $C(\overline{\Omega})$. We do not know whether the 
conclusion of Theorem \ref{THEOREM1} still holds under this weaker assumption on $\varphi$. 
One natural approach would be to approximate $\varphi\in C(\overline{\Omega})$ by suitable 
symbols in $C^{1}(\overline{\Omega})$. Our proof below indicates that it suffices to 
approximate $\varphi\in C(\overline{\Omega})$, holomorphic along the varieties in the 
boundary, uniformly on $\overline{\Omega}$ to within $\varepsilon$ by 
$\varphi_{\varepsilon}\in C^{1}(\overline{\Omega})$ with 
$|\overline{\partial}_{V}\varphi_{\varepsilon}| \leq \eta(\varepsilon)$, 
where $\lim_{\varepsilon\rightarrow 0^{+}}\eta(\varepsilon)=0$ (rather than with 
$\overline{\partial}_{V}\varphi_{\varepsilon} = 0$). Here, $\overline{\partial}_{V}$ 
denotes $\overline{\partial}_{V}$ along $V\subset b\Omega$.
\end{remark} 

One of the reasons compactness of Hankel operators is of interest in operator theory is the 
connection to the Fredholm theory of Toeplitz operators. For $\varphi\in L^{\infty}(\Omega)$, 
the Toeplitz operator $T^{q}_{\varphi}$ is the bounded operator on $K^{2}_{(0,q)}(\Omega)$ 
defined by $T^{q}_{\varphi}f = P_{q}(\varphi f)$ (see e.g.\cite{FollandKohnBook}, 
\cite[Section IV.4]{Venugopal}). This connection provides the following consequence of 
Theorem \ref{THEOREM1}, which gives the Fredholm property for certain Toeplitz operators 
in the absence of compactness in the $\overline{\partial}$--Neumann problem.

\begin{corollary}\label{Fredholm}
 Let $\D$ be a bounded convex domain in $\C^n, \varphi\in C^1(\Dc)$, and 
$0\leq q\leq n-1$. Assume that $\varphi$ is holomorphic along the varieties in 
the boundary of dimension $q+1$ (and higher), and that $\varphi$ does not vanish 
on $b\Omega$ ($q=0$), or $\varphi$ does not vanish on $\overline{\Omega}$ ($1\leq q\leq (n-1)$). 
Then the Toeplitz operator $T^{q}_{\varphi}$ is Fredholm of index zero.
\end{corollary}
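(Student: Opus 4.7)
The plan is to deduce Fredholmness of $T^{q}_{\varphi}$ from the standard identity relating Toeplitz and Hankel operators applied to $\varphi$ and $1/\varphi$, and then to force the index to zero via a homotopy through Fredholm operators.

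First I would reduce to the case that $\varphi$ is nowhere zero on all of $\overline{\Omega}$. When $q\geq 1$, this is already the hypothesis. When $q=0$ and $\varphi$ has zeros in the interior, I would modify $\varphi$ on a compact subset of $\Omega$ to obtain $\widetilde{\varphi}\in C^{1}(\overline{\Omega})$ with $\widetilde{\varphi}\neq 0$ on $\overline{\Omega}$ and $\widetilde{\varphi}-\varphi$ compactly supported in $\Omega$. (Since $\varphi$ is nonzero on a neighborhood $U$ of $b\Omega$ and $b\Omega$ is simply connected, a $C^{1}$ logarithm of $\varphi$ exists on $U$; extending it smoothly to $\overline{\Omega}$ with a cutoff and exponentiating yields such a $\widetilde{\varphi}$.) Multiplication by a function with compact support in $\Omega$ is a compact operator on $A^{2}(\Omega)$, so $T^{0}_{\varphi}-T^{0}_{\widetilde{\varphi}}$ is compact, and $T^{0}_{\varphi}$ is Fredholm of index zero if and only if $T^{0}_{\widetilde{\varphi}}$ is. The modification leaves the symbol unchanged near $b\Omega$, hence preserves the hypothesis on varieties.

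With $\varphi$ nowhere zero on $\overline{\Omega}$, $1/\varphi\in C^{1}(\overline{\Omega})$, and on each affine variety $V\subset b\Omega$ of dimension at least $q+1$, $\varphi|_{V}$ is holomorphic and nowhere zero, so $(1/\varphi)|_{V}$ is holomorphic as well. Theorem \ref{THEOREM1} then applies to both $\varphi$ and $1/\varphi$, yielding compactness of $H^{q}_{\varphi}$ and $H^{q}_{1/\varphi}$. A direct computation from the definitions gives, for $f,g\in L^{\infty}(\Omega)$,
\[
T^{q}_{f}T^{q}_{g}=T^{q}_{fg}-P_{q}M_{f}H^{q}_{g}\big|_{K^{2}_{(0,q)}(\Omega)},
\]
where $M_{f}$ denotes multiplication by $f$. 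Applying this with $(f,g)=(\varphi,1/\varphi)$ and $(1/\varphi,\varphi)$ shows that $T^{q}_{\varphi}$ has two-sided inverses modulo compact operators, so it is Fredholm.

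For the index, I use that $\overline{\Omega}$ is convex, hence simply connected, so the nowhere-zero $C^{1}$ function $\varphi$ admits a $C^{1}$ logarithm $\psi$ with $\varphi=e^{\psi}$. On each affine variety $V\subset b\Omega$ of dimension $\geq q+1$, $V$ is convex (hence simply connected) and $\varphi|_{V}$ is holomorphic and nowhere zero, so it admits a holomorphic logarithm on $V$; this must agree with $\psi|_{V}$ up to an additive constant in $2\pi i\mathbb{Z}$ by connectedness of $V$, so $\psi|_{V}$ is itself holomorphic. Set $\varphi_{t}:=e^{t\psi}$ for $t\in[0,1]$: each $\varphi_{t}$ lies in $C^{1}(\overline{\Omega})$, is nowhere zero on $\overline{\Omega}$, and is holomorphic along the relevant varieties, so $T^{q}_{\varphi_{t}}$ is Fredholm for every $t$ by the previous paragraph. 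Since $t\mapsto\varphi_{t}$ is continuous into $L^{\infty}(\Omega)$, the path $t\mapsto T^{q}_{\varphi_{t}}$ is norm-continuous, so the Fredholm index is constant along it and equals $\ind T^{q}_{\varphi_{0}}=\ind I=0$. The substantive geometric point in the whole argument is precisely this verification that the logarithm $\psi$ of $\varphi$ on $\overline{\Omega}$ automatically restricts to a \emph{holomorphic} function on each variety in $b\Omega$, which is what keeps the homotopy inside the class of symbols to which Theorem \ref{THEOREM1} still applies; everything else is a routine consequence of Theorem \ref{THEOREM1}, the Hankel-Toeplitz product identity, and Fredholm stability under norm-continuous perturbations.
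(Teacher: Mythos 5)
Your proposal is correct and follows essentially the same route as the paper: reduce the $q=0$ case to a nonvanishing symbol by a compactly supported modification, obtain Fredholmness from the Toeplitz--Hankel semicommutator identity applied to $\varphi$ and $1/\varphi$ together with Theorem \ref{THEOREM1}, and get index zero from the homotopy $e^{t\psi}$ using simple connectivity. The only differences are cosmetic (you write $P_qM_fH^q_g$ where the paper writes $(H^q_{\overline{f}})^{*}H^q_g$, and you spell out the verification that $\psi$ is holomorphic along the boundary varieties slightly more explicitly).
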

We provide the details of the argument in Section \ref{FredholmProof}, but note here 
that as usual, index zero results form the fact that the boundary of a convex domain 
(for $q=0$) and the domain itself (for $1\leq q\leq(n-1)$) are simply connected. The 
distinction between $q=0$ and $q>0$ arises because the restriction to a compact 
subset of $\Omega$ is a compact operator on $K^{2}_{(0,q)}(\Omega)$ when $q=0$ 
(i.e. on holomorphic functions), while it is not when $q>0$. Alternatively, 
one could consider Toeplitz operators on $A^{2}_{(0,q)}(\Omega)$, or on 
$\ker(\overline{\partial})\cap \text{dom}(\overline{\partial}^{*})\subset K^{2}_{(0,q)}(\Omega)$, 
endowed with the graph norm, to avoid this issue.

Our proof of Theorem \ref{THEOREM1}, given in Section \ref{proof}, owes much to 
\cite{Zimmer21}. Zimmer's use of Frankel's work (\cite{Frankel91}), and in particular 
his realization that the canonical potential for the Bergman metric on a convex domain 
has self bounded (complex) gradient, are important ingredients in this proof. The other two main 
ingredients come from \cite{StraubeICM, StraubeBook}, specifically from the proof given 
there of McNeal's result (\cite{McNeal02}) that property$(\widetilde{P_{q}})$ implies 
compactness of the $\overline{\partial}$--Neumann operator on $(0,q)$--forms 
(proof of Theorem 2.1 in \cite{StraubeICM}, proof of Theorem 4.29 
in \cite{StraubeBook}). The work in \cite{Zimmer21} also suggests possible 
generalizations of Theorem \ref{THEOREM1} (as well as of the main result in 
\cite{CelikSahutogluStraube20b}), but we do not pursue this direction here.

%%%%%%%%%%%%%%%%%%%%%%%%%%%%%%%%%%%%
\section{Proof of Theorem \ref{THEOREM1}}\label{proof}

We first explain the strategy of the proof. Compactness of $H^{q}_{\varphi}$ will be 
established by showing that we have (a family of) compactness estimates: for all 
$\varepsilon >0$ there exists a compact operator $K_{\varepsilon}$ from 
$K^{2}_{(0,q)}(\Omega)$ to some Hilbert space $Z_{\varepsilon}$, such that 
\begin{equation}\label{compest}
\|H^{q}_{\varphi}\|^{2} \leq \varepsilon \|f\|^{2} + \|K_{\varepsilon}f\|^{2}
 \end{equation}
(see \cite[Lemma 4.3, part (ii)]{StraubeBook}). Observe that if $\varphi\in C^1(\overline{\Omega})$ 
and $f\in K^{2}_{(0,q)}(\Omega)$, then Kohn's formula 
$P_{q} = Id - \overline{\partial}^{*}N_{q+1}\overline{\partial}$ gives
\begin{equation}\label{norm2}
\|H^{q}_{\varphi}f\|^{2} 
= \langle\overline{\partial}^{*}N_{q+1}(\overline{\partial}\varphi\wedge f), 
\overline{\partial}^{*}N_{q+1}(\overline{\partial}\varphi\wedge f)\rangle 
= \langle N_{q+1}(\overline{\partial}\varphi\wedge f), \overline{\partial}\varphi\wedge f\rangle;
\end{equation}
here, $N_{q+1}$ denotes the $\overline{\partial}$--Neumann operator on $(0,q+1)$--forms, 
and $\langle\cdot,\cdot\rangle$ denotes the inner product in $L^{2}_{(0,q)}(\Omega)$ 
and $L^{2}_{(0,q+1)}(\Omega)$, respectively. The goal is now to establish the estimates 
\eqref{compest} for the right hand side of \eqref{norm2}.

Let us assume for this paragraph that $q=0$, to simplify the discussion. If $N_{1}$ were compact, 
i.e. if there were no varieties in the boundary, compactness of $H^{0}_{\varphi}$ would be 
immediate from \eqref{norm2}. Compactness of $N_{1}$ in the absence of varieties is established 
by showing that there exist families of plurisubharmonic functions, uniformly bounded  or 
with gradient uniformly self bounded (see below), that have Hessians as large as we please 
(property$(P)$ or property$(\widetilde{P})$, respectively). The large Hessians are used to 
produce compactness estimates. An analytic disc $V$ in the boundary is an obstruction to the 
existence of these families. On the other hand, in the right hand side of \eqref{norm2}, only 
the component of $N_{1}(f\overline{\partial}\varphi)$ along $\overline{\partial}\varphi$ enters. 
If $\varphi$ is holomorphic along $V$, this component is transverse to $V$. In order to estimate 
this component, we therefore only need families of functions with big Hessians in directions 
transverse to $V$. Such families should exist, the presence of $V$ notwithstanding. This is 
indeed the case, and the desired families can be obtained form $\log B(z,z)$, where $B$ is the 
Bergman kernel.

We begin with a lemma that makes this discussion rigorous. 
Denote by $(g_{jk})_{j,k=1,\ldots,n}(z)$ the Bergman metric at the point $z\in\Omega$, 
i.e. $g_{jk}=\partial^{2}\log B(z,z)/\partial z_{j}\partial\zb_k$. A good reference 
for properties of the Bergman metric is  \cite[Chapter VI]{JarnickiPflugBook2ndEd}. 
\begin{lemma}\label{LEMMA2}
Let $\Omega$ be a bounded convex domain, $0\leq q\leq (n-1)$, and 
$\varphi\in C^{1}(\overline{\Omega})$. Assume that $\varphi$ is holomorphic along 
varieties in $b\Omega$ of dimension $(q+1)$ (or higher). Then, for every $\varepsilon >0$, 
there exists a relatively compact subdomain $\Omega_{\varepsilon}$ of $\D$ with the 
following property. For each $z\in\Omega\setminus\Omega_{\varepsilon}$ and each set 
$t_{1}(z),\ldots, t_{q+1}(z)$ of orthonormal eigenvectors of $(g_{jk})(z)$, if the sum 
of the corresponding eigenvalues is $\leq \varepsilon^{-1}$, then 
$\left|(\overline{\partial}\varphi(z), t_{s}(z))\right| \leq \varepsilon$, $s=1,\ldots,(q+1)$.
\end{lemma}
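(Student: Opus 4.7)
My plan is to argue by contradiction, extracting from the failure of the conclusion a $(q{+}1)$-dimensional complex affine polydisc sitting in $b\Omega$, and then invoking the hypothesis to force a vanishing of $\overline{\partial}\varphi$ that contradicts the alleged nonzero component.

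Suppose the conclusion fails for some $\varepsilon_{0}>0$. Then there is a sequence $\{z_{m}\}\subset\Omega$ with $\dist(z_{m},b\Omega)\to 0$, and at each $z_{m}$ orthonormal eigenvectors $t_{1}^{m},\dots,t_{q+1}^{m}$ of $(g_{jk})(z_{m})$ whose eigenvalues $\lambda_{1}^{m},\dots,\lambda_{q+1}^{m}$ sum to at most $\varepsilon_{0}^{-1}$, yet $|(\overline{\partial}\varphi(z_{m}),t_{s_{m}}^{m})|>\varepsilon_{0}$ for some $s_{m}\in\{1,\dots,q{+}1\}$. After passing to a subsequence, $s_{m}=s$ is fixed, $z_{m}\to z_{\infty}\in b\Omega$, each $t_{r}^{m}\to t_{r}^{\infty}$ (still orthonormal), and $\lambda_{r}^{m}\to\lambda_{r}^{\infty}\in[0,\varepsilon_{0}^{-1}]$. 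By continuity of $\overline{\partial}\varphi$, $|(\overline{\partial}\varphi(z_{\infty}),t_{s}^{\infty})|\geq\varepsilon_{0}$.

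The central geometric step is to produce a complex $(q{+}1)$-dimensional affine polydisc $V\subset b\Omega$ through $z_{\infty}$ whose holomorphic tangent space at $z_{\infty}$ contains each $t_{r}^{\infty}$. For this I would use the standard comparison on bounded convex domains between the Bergman metric and the one-sided Euclidean distance to the boundary, namely $g_{jk}(z)\,v^{j}\overline{v^{k}}\asymp |v|^{2}/d_{v}(z)^{2}$, where $d_{v}(z)$ is the largest inscribed-disc radius at $z$ in the complex direction $v$. This follows from the Kobayashi-metric estimate via inscribed affine discs on convex domains together with the Bergman--Kobayashi comparability. Applied to $t_{r}^{m}$ with $\lambda_{r}^{m}\leq\varepsilon_{0}^{-1}$ it yields $d_{t_{r}^{m}}(z_{m})\geq c>0$ uniformly, so each affine disc $\{z_{m}+\zeta t_{r}^{m}:|\zeta|\leq c\}$ lies in $\Omega$. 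Convexity then forces the joint polydisc $\{z_{m}+\sum_{r}\zeta_{r}t_{r}^{m}:|\zeta_{r}|\leq c/(q{+}1)\}$ to lie in $\Omega$, and letting $m\to\infty$ the limit polydisc $P$ centred at $z_{\infty}$ with directions $t_{r}^{\infty}$ lies in $\overline{\Omega}$. To see $P\subset b\Omega$, let $H=\{L=0\}$ be any real supporting hyperplane of $\Omega$ at $z_{\infty}$ with $\Omega\subset\{L>0\}$, and write $L(z)=\mathrm{Re}\,\ell(z-z_{\infty})$ for a $\C$-linear $\ell$. The inequality $L(z_{\infty}+\zeta t_{r}^{\infty})=\mathrm{Re}(\zeta\,\ell(t_{r}^{\infty}))\geq 0$ on an entire disc around $\zeta=0$ forces $\ell(t_{r}^{\infty})=0$, so $P$ lies in the maximal complex subspace of $H$, hence in $H\cap\overline{\Omega}\subset b\Omega$.

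Taking $V:=P$, the hypothesis on $\varphi$ yields that $\varphi|_{V}$ is holomorphic, so $(\overline{\partial}\varphi(z_{\infty}),t_{s}^{\infty})=0$, contradicting the lower bound $\varepsilon_{0}$. I expect the main obstacle to be the joint-eigenvector Bergman--Euclidean comparison: while $\lambda\asymp 1/d^{2}$ for a single direction on convex domains is classical, applying it uniformly to $(q{+}1)$ orthonormal eigenvectors and promoting the resulting affine discs into a genuine $(q{+}1)$-dimensional polydisc in the boundary requires both the convex-combination step and the supporting-hyperplane argument above; this is precisely where the Frankel/Zimmer circle of ideas about convex-domain geometry and self-bounded gradient of the canonical potential enters.
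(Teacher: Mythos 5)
Your proposal is correct and follows essentially the same route as the paper: a contradiction argument using the Frankel-type comparison between the Bergman metric and the distance to $b\Omega$ along complex lines to get uniformly large affine discs at $z_m$, convexity to assemble them into a $(q+1)$-dimensional affine piece, containment of the limit piece in $b\Omega$, and the holomorphy hypothesis to contradict the assumed lower bound on $|(\overline{\partial}\varphi,t_s)|$. The only minor differences are that you put the limit polydisc in the boundary via a supporting hyperplane (the paper uses the plurisubharmonic Minkowski gauge plus the strong maximum principle) and you do not single out $q=n-1$ (which the paper treats separately as vacuous near $b\Omega$); your argument still produces the contradiction in that case, since the supporting functional would have to vanish on a spanning set of directions.
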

We have identified $\overline{\partial}\varphi(z)$ with a vector in the obvious way, 
and $(\cdot,\cdot)$ denotes the inner product in $\mathbb{C}^{n}$.
\begin{proof} 
First of all, let $0\leq q\leq (n-2)$. We argue indirectly. Assume that the conclusion 
of the lemma is false. Then there exist a sequence $\{z_m\} \subset \D, z_0\in b\D$  
and orthonormal eigenvectors $t_1(z_m),\ldots,t_{q+1}(z_m)$ for the matrix 
$(g_{jk})(z_m)$ with associated eigenvalues $\sigma_{1}(z_{m}), \ldots,\sigma_{q+1}(z_{m})$ 
with the following three properties: 
\begin{itemize}
\item[(i)] $z_m\rightarrow z_0$ as $m\to \infty$,   
\item[(ii)] $\sigma_{1}(z_m)+\cdots+\sigma_{q+1}(z_m)\leq \varepsilon^{-1}$, 
\item[(iii)] for every $m$ there is $s_m$, $1\leq s_{m}\leq (q+1)$, 
with $|(\dbar\varphi(z_m),t_{s_{m}}(z_{m})|>\ep$. 
\end{itemize}
After passing to a suitable subsequence, we may assume that $s_m=s_{0}$ for all $m$ 
($1\leq s_{0}\leq(q+1)$) and that the frames $t_1(z_m),\ldots,t_{q+1}(z_m)$ converge 
to an orthonormal frame $t_{1}(z_{0}), \ldots, t_{q+1}(z_{0})$ at $z_0$ which spans 
a $(q+1)$--dimensional affine subspace of $\mathbb{C}^{n}$.

We next show that this affine subspace intersects $\overline{\Omega}$ in a nontrivial 
$(q+1)$--dimensional (affine) variety contained in $b\Omega$. In order to do this, 
we need a notion from \cite{Frankel91} that is clearly relevant for dealing with 
affine varieties in the boundary. For $z\in\Omega$ and $v\in\mathbb{C}^{n}$, set 
\begin{equation}\label{}
\delta_{\Omega}(z,v):= \min\{|w-z|:w\in b\Omega\cap(z+\mathbb{C}v)\}.
\end{equation}
On convex domains, the quantity $|v|/\delta_{\Omega}(z,v)$ (referred to in \cite{Frankel91} 
as the complex $1/d$--metric) satisfies a crucial comparison to the Bergman metric 
(\cite{Frankel91}, \cite[Theorem 4.3]{Zimmer21} and see \cite{NikPflugZwonek11} 
for this result on $\mathbb{C}$--convex domains): for 
all $n\in\mathbb{N}$, there is a constant $A_{n}$ such that if $\Omega$ is a bounded 
convex domain in $\mathbb{C}^{n}$, then
\begin{equation}\label{comparison}
 \frac{1}{A_{n}}\frac{|v|}{\delta_{\Omega}(z,v)} \leq \left((g_{jk})(z)(v,v)\right)^{1/2} 
\leq A_{n}\frac{|v|}{\delta_{\Omega}(z,v)}.
\end{equation}
The left inequality gives
\begin{equation}\label{lowerbound}
\delta_{\Omega}(z_{m},t_{s}(z_{m})) 
\geq \frac{\varepsilon^{1/2}}{A_{n}},\;s=1,\ldots,(q+1),\;m=1, 2, \ldots
\end{equation}
(since each $\sigma_{s}(z_{m}) \leq 1/\varepsilon$). Therefore for $1\leq s\leq (q+1)$, 
the affine discs $V_{s}:=\{z_{0}+\zeta t_{s}(z_{0}):|\zeta|\leq (\varepsilon^{1/2}/A_{n})\}$ 
are contained in $\overline{\Omega}$. As a convex domain, $\Omega$ admits a 
plurisubharmonic defining function (obtained from its Minkowski functional or 
gauge function, which is convex (\cite[page 35]{Rockafellar70}, 
\cite[Lemma 6.2.4]{ChenShawBook}) and therefore plurisubharmonic). The value of 
such a defining function at the center of $V_{s}$ (i.e. at $z_{0}$) is zero, while it is 
less than or equal to zero throughout the disc. The strong maximum principle for 
subharmonic functions implies that the value is zero throughout, that is 
$V_{s}\subset b\Omega$. Essentially the same argument shows that the convex 
hull of $\cup_{s=1}^{q+1}V_{s}$ is also contained in the boundary. By assumption, 
$\varphi$ is holomorphic along this variety. In particular, 
$(\overline{\partial}\varphi(z_{0}), t_{s_{0}}(z_0)) = 0$. This contradicts the 
fact that $|(\overline{\partial}\varphi(z_{m}), t_{s_{0}}(z_{m}))| > \varepsilon$.

If $q=(n-1)$, \eqref{comparison} shows that 
$\sigma_{1}(z_m)+\cdots+\sigma_{n}(z_m)\rightarrow\infty$ when 
$z_{m}\rightarrow b\Omega$, since $\delta_{\Omega}(z_{m},v)\rightarrow 0$ 
for $v$ transverse to $b\Omega$. So this sum will always be greater than 
$\varepsilon^{-1}$ for $z$ outside a relatively compact subdomain 
$\Omega_{\varepsilon}$, and the statement of the lemma holds.
\end{proof}

We are now ready for the proof of Theorem \ref{THEOREM1}.
\begin{proof}[Proof of Theorem \ref{THEOREM1}]
Denote by $\lambda(z)$ the Bergman potential; $\lambda(z):=\log B(z,z)$. The property 
that is crucial for us is that $\lambda$ has self bounded gradient on $\Omega$: 
\begin{equation}\label{selfbounded}
\left|\sum_{j=1}^{n}\frac{\partial\lambda}{\partial z_{j}}(z)w_{j}\right|^{2} 
\leq C\sum_{j,k=1}^{n}\frac{\partial^{2}\lambda}{\partial z_{j}\partial\zb_k}(z)
w_{j}\overline{w_{k}}\;;\;z\in\Omega,\;w\in\mathbb{C}^{n}.
\end{equation}
That is, $\partial\lambda$, measured in the metric induced by 
$\frac{\partial^{2}\lambda}{\partial z_{j}\partial\zb_k}$, is bounded 
(\cite[Proposition 4.6]{Zimmer21}).

In order to carry out the proof idea sketched above, we need a family of functions 
$\lambda_{\varepsilon}$ with uniformly (in $\varepsilon$) self bounded gradients, 
smooth in a neighborhood of the boundary, and with 
$\frac{\partial^{2}\lambda_{\varepsilon}}{\partial z_{j}\partial\zb_k}$ satisfying 
the properties from Lemma \ref{LEMMA2}, for the given $\varepsilon$.  Without loss of 
generality, assume that $0\in\Omega$. Let $\Omega_{\varepsilon}$ be from 
Lemma \ref{LEMMA2} and $0<\ep'\leq \ep$ small enough so that  
$\widetilde{\Omega}_{\varepsilon}:=(1+\varepsilon')\Omega_{\varepsilon}\Subset\Omega$. We set 
\begin{equation}\label{EQ50}
\lambda_{\ep}(z):=\lambda\left(\frac{z}{1+\ep'}\right).
\end{equation}
Then $\lambda_{\ep}\in C^{\infty}((1+\ep')\D)$ and  
\begin{align}\label{EQ51}
\partial\lambda_{\ep}(z)=\frac{1}{1+\ep'}\partial\lambda\left(\frac{z}{1+\ep'}\right); \quad 
\frac{\partial^2\lambda_{\ep}}{\partial z_j\partial \zb_k}(z) 
=\frac{1}{(1+\ep')^2}\frac{\partial^2\lambda}{\partial z_j\partial \zb_k} 
\left(\frac{z}{1+\ep'}\right). 
\end{align}
Because $\lambda$ has self bounded gradient, one easily checks that the 
$\lambda_{\varepsilon}$ have gradients that are self bounded uniformly in $\varepsilon$.

Let $z$ be in $\D\setminus \widetilde{\D}_{\ep}$ and assume that  the matrix 
$\left(\frac{\partial^2\lambda_{\ep}}{\partial z_j\partial \zb_k}(z)\right)$ 
has a set of $(q+1)$ eigenvalues $\sigma_{\ep,1}(z),\ldots, \sigma_{\ep,q+1}(z)$ with 
corresponding eigenvectors $t_{\ep,1}(z),\ldots,t_{\ep,q+1}(z)$  such that 
$\sigma_{\ep,1}(z)+\cdots+\sigma_{\ep,q+1}(z)\leq \ep^{-1}$. 
We claim that then
\begin{align}\label{EQ52}
|(\dbar\varphi(z),t_{\varepsilon,s}(z))|\leq \omega(\varepsilon)\;;\;1\leq s\leq (q+1), 
\end{align}
where $\omega(\varepsilon)$ denotes a quantity that is independent of $z$ and 
tends to zero as $\varepsilon$ tends to zero.
To see this, note that the eigenvalues of 
$\left(\frac{\partial^{2}\lambda_{\varepsilon}}{\partial z_{j}\partial\zb_k}\right)(z)$ 
are $1/(1+\varepsilon')^{2}$ times those of 
$\left(\frac{\partial^{2}\lambda}{\partial z_{j}\partial\zb_k}\right)(z/(1+\varepsilon'))$ 
(see \eqref{EQ51}), and the associated eigenvectors are the same. Because 
$\varphi\in C^{1}(\overline{\Omega})$, we also have that 
$|\overline{\partial}\varphi(z) - \overline{\partial}\varphi(z/(1+\varepsilon'))|$ tends to 
zero uniformly for $z\in\overline{\Omega}$ as $\varepsilon$ tends to zero. Combining 
these two observations with a triangle inequality argument yields \eqref{EQ52}; 
the resulting quantity $\omega(\varepsilon)$ involves, among other things, the modulus 
of continuity of $\overline{\partial}\varphi$ on $\overline{\Omega}$.

We are now in a position to establish \eqref{compest}. Fix $\ep>0$ and pick $\Omega_{\ep}$ 
and $\lambda_{\varepsilon}$ from Lemma \ref{LEMMA2} and \eqref{EQ50}, respectively. 
To simplify notation, set
\begin{align}\label{EQ32}
v:=N_{q+1}(\dbar\varphi\wedge f).
\end{align}
Then $ v\in \ker(\dbar)\cap \text{dom}(\dbar^*)$ and $\|v\|+\|\dbar^*v\|\lesssim \|f\|$. 
As in the proof of Theorem 4.29 in \cite{StraubeBook} or Theorem 2.1 in \cite{StraubeICM} 
(that property $(\widetilde{P_{q}})$ implies compactness of $N_{q}$), we need an estimate 
that brings the Hessian of $\lambda_{\varepsilon}$ into play. If $(\lambda_{\varepsilon})$ 
is a family of functions with uniformly self bounded gradient, and 
$u\in \ker(\overline{\partial})\cap \text{dom}(\overline{\partial}^{*})\subseteq L^{2}_{(0,q)}(\Omega)$, 
then
\begin{equation}\label{crucialestimate}
 \int_{\Omega}\sumprime_{|K|=q}\sum_{j,k}
\frac{\partial^{2}\lambda_{\varepsilon}}{\partial z_{j}
\partial\zb_k}(e^{-\lambda_{\varepsilon}/2}u)_{jK}
\overline{(e^{-\lambda_{\varepsilon}/2}u)_{kK}}\,dV 
\lesssim \|\overline{\partial}^{*}(e^{-\lambda_{\varepsilon}/2}u)\|^{2},
\end{equation}
with a constant that does not depend on $\varepsilon$. This estimate results from 
the Morrey--Kohn--H\"{o}rmander formula with weight $\lambda_{\varepsilon}$ 
(see, for instance, \cite[Proposition 2.4]{StraubeBook}). 
On the left hand side, we have distributed the weight factor $e^{-\lambda_{\varepsilon}}$ 
into the form $e^{-\lambda_{\varepsilon}/2}u$. There is no $\dbar$--term on 
the right hand side, as $\dbar u=0$, and replacing the (weighted) 
$\dbar^*_{\lambda_{\varepsilon}}$--term in \cite[(2.24)]{StraubeBook} 
by the right hand side of \eqref{crucialestimate} makes an error of the order 
$|\partial\lambda_{\varepsilon}(e^{-\lambda_{\varepsilon}/2}u)|$. 
This error can be absorbed into the left hand side by the self bounded 
gradient condition. This assumes that the constant in the self bounded 
gradient estimate is small enough. This property can always be 
achieved by scaling $\lambda_{\varepsilon}$ to $c\lambda_{\varepsilon}$ 
if necessary (the two sides of \eqref{selfbounded} scale with $c^{2}$ 
and $c$, respectively). This scaling does not affect the argument, 
and we continue with $\lambda_{\varepsilon}$. Details are in \cite{StraubeBook}, 
estimate (4.80). There is a compactly supported term on the right hand side in (4.80) 
in \cite{StraubeBook}, but this term is the result of the function $\lambda_{M}$ 
there satisfying the self bounded gradient assumption only in a neighborhood 
of the boundary, and so does not occur here. 

The factor $e^{-\lambda_{\varepsilon}/2}$ with the form $u$ in \eqref{crucialestimate} 
appears problematic at first, as there is no such factor in the right hand side of 
\eqref{norm2} (the quantity we want to estimate). However, the following observation 
from \cite{StraubeICM, StraubeBook} allows to handle this difficulty. If $v$ is 
$\overline{\partial}$--closed and we define 
\begin{align}\label{EQ32-2}
u_{\ep}:=P_{q+1,\lambda_{\ep}/2}(e^{\lambda_{\ep}/2}v) 
\in\ker(\dbar)\cap\text{dom}(\dbar^*),
\end{align} 
then one has
\begin{align}\label{EQ33}
P_{q+1}(e^{-\lambda_{\ep}/2}u_{\ep})=v ;\text{ and then also }
\overline{\partial}^{*}(e^{-\lambda_{\varepsilon}/2}u_{\ep}) = \overline{\partial}^{*}v.
\end{align}
Here, $P_{q+1}$ and $P_{q+1,\lambda_{\ep}/2}$ are the Bergman projection and 
the weighted Bergman projection on $(0,q+1)$-forms on $\Omega$, respectively. 
One can check \eqref{EQ33} by pairing with $\overline{\partial}$--closed forms, 
see \cite[(4.83) -- (4.85)]{StraubeBook}. The point of \eqref{EQ33} in the present 
context is that because $\dbar\varphi\wedge f$ is $\dbar$-closed, the quantity 
we need to estimate now becomes
\begin{align}\label{EQ34}
\langle v,\dbar\varphi\wedge f\rangle
=\langle e^{-\lambda_{\ep}/2}u_{\ep},\dbar\varphi\wedge f\rangle,
\end{align}
and we have the exponential factor. Moreover, by \eqref{EQ33}, the right hand 
side of \eqref{crucialestimate} equals 
$\overline{\partial}^{*}v = \overline{\partial}^{*}N_{q+1}(\overline{\partial}\varphi\wedge f)$, 
and so is dominated by $\|f\|$ (independently of $\varepsilon$). Therefore, we will  
get good estimates on $e^{-\lambda_{\varepsilon}/2}u_{\varepsilon}$, hence on 
$\langle v,\overline{\partial}\varphi\wedge f\rangle$, in directions where 
$\lambda_{\varepsilon}$ has big Hessian. In the other directions, we will estimate 
$\langle v,\overline{\partial}\varphi\wedge f\rangle$ via \eqref{EQ52}.

The argument proceeds as follows. Let $z\in \D\setminus \widetilde{\D}_{\ep}$ 
(fixed for now). We want to estimate 
\begin{equation}\label{}
\left( e^{-\lambda_{\varepsilon}/2}u_{\varepsilon},\overline{\partial}\varphi\wedge f\right)(z) 
=\sumprime_{|J|=q+1}\left((e^{-\lambda_{\varepsilon}/2}u_{\varepsilon})_{J},
(\overline{\partial}\varphi\wedge f)_{J}\right)(z).
\end{equation}
To do this, we work relative to a basis $\{t_{\ep,1}(z),\ldots, t_{\ep,n}(z)\}$ of 
eigenvectors of $(\frac{\partial^{2}\lambda_{\varepsilon}}{\partial z_{j}\partial\zb_k})(z)$ 
with associated eigenvalues $\sigma_{\varepsilon,1}(z), \ldots, \sigma_{\varepsilon,n}(z)$.  
Fix a multi index $J = (j_{1},\ldots, j_{q+1})$. Then
\begin{multline}\label{EQ70}
\sumprime_{|K|=q}\sum_{j,k}
\frac{\partial^{2}\lambda_{\varepsilon}}{\partial z_{j}\partial\zb_k}(z) 
(e^{-\lambda_{\varepsilon}/2}u_{\varepsilon})_{jK}
\overline{(e^{-\lambda_{\varepsilon}/2}u_{\varepsilon})_{kK}} \\
= \sumprime_{|K|=q}\sum_{j}\sigma_{\varepsilon,j}(z) |(e^{-\lambda_{\varepsilon}/2}u_{\varepsilon})_{jK}|^{2} 
\geq (\sigma_{\ep,j_{1}}(z)+ \cdots +\sigma_{\ep,j_{q+1}}(z))|(e^{-\lambda_{\varepsilon}/2}u_{\varepsilon})_{J}|^{2}\;;
\end{multline}
compare equation (4.30) in \cite{StraubeBook}. The reason for the last inequality is
 that $(u_{\varepsilon})_{J}$ appears via $(q+1)$ pairs $(j,K)$, namely for $j=j_1,\ldots, j=j_{q+1}$; 
the other terms not involving $J$ are nonnegative. If 
$\sigma_{\ep,j_{1}}(z)+\cdots +\sigma_{\ep,j_{q+1}}(z) \geq 1/\varepsilon$,
we obtain 
\begin{align}\label{EQ38}
|(e^{-\lambda_{\ep}/2}u_{\ep})_{J}(z)|^2
&\leq\ep\sumprime_{|K|=q}\sum\limits_{j,k}
\frac{\partial^2\lambda_{\ep}}{\partial z_{j}\partial\zb_{k}}(z) 
 (e^{-\lambda_{\ep}/2}u_{\ep})_{jK}\overline{(e^{-\lambda_{\ep}/2}u_{\ep})}_{kK}.
\end{align}
If $\sigma_{\ep,j_{1}}(z)+\cdots +\sigma_{\ep,j_{q+1}} (z)< 1/\varepsilon$, then 
$|(\dbar\varphi(z),t_{\ep,j_{s}}(z))|\leq \omega(\varepsilon)$ for $1\leq s\leq q+1$ 
(see \eqref{EQ52}; $z\in \D\setminus \widetilde{\D}_{\ep}$). Therefore
\begin{align}\nonumber 
|(\dbar\varphi\wedge f)_J(z)|=&|(\dbar\varphi\wedge f)(z)(t_{\ep,j_{1}},t_{\ep,j_{2}},\ldots,t_{\ep,j_{q+1}})|\\
\label{EQ42}\leq &\sum\limits_{s=1}^{q+1} |(\dbar\varphi,t_{\ep,j_{s}})(z)| 
|f(t_{\ep,j_{1}},t_{\ep,j_{2}},\ldots, \check{t_{\ep,j_{s}}},\ldots,t_{\ep,j_{q+1}})|
 \lesssim\omega(\varepsilon)|f(z)|.
\end{align}
To control $e^{-\lambda_{\varepsilon}/2}u_{\varepsilon}$ in this case, we use that 
$(\frac{\partial^{2}\lambda}{\partial z_{j}\partial\zb_k})$ is uniformly bounded from 
below; this can be seen for example from \eqref{comparison}, because $\Omega$ is 
bounded. Therefore
\begin{align}\label{EQ39}
|e^{-\lambda_{\ep}/2}u_{\ep}(z)|^2
&=\frac{1}{q+1}\sumprime_{|K|=q}\sum\limits_{j} |e^{-\lambda_{\ep}/2}(u_{\ep})_{jK}(z)|^2\\
\nonumber&\lesssim \frac{1}{q+1}\sumprime_{|K|=q}\sum\limits_{j,k} 
\frac{\partial^2\lambda_{\ep}}{\partial z_{j}\partial\zb_{k}} (z)
(e^{-\lambda_{\varepsilon}/2}u_{\ep})_{jK}\overline{(e^{-\lambda_{\varepsilon}/2}u_{\ep})_{kK}}\;;
\end{align}
in the last step $\lesssim$ is independent of $f$ and $\ep$. Thus we obtain from both cases 
that when $z\in\D\setminus \widetilde{\D}_{\ep}$, 
\begin{align*} 
\left|\left( e^{-\lambda_{\ep}/2}u_{\ep},\dbar\varphi\wedge f\right)(z)\right|^2
\lesssim (\ep+\omega^2(\varepsilon))\left(\sumprime_{|K|=q}\sum\limits_{j,k} 
\frac{\partial^2\lambda_{\ep}}{\partial z_{j}\partial\zb_{k}}(z)(e^{-\lambda_{\varepsilon}/2}u_{\ep})_{jK} 
\overline{(e^{-\lambda_{\varepsilon}/2}u_{\ep})_{kK}}\right)|f(z)|^2.
\end{align*}
In the estimate above $\lesssim$ is independent of $\ep$ and $f$. Setting 
$\widetilde{\omega}(\varepsilon):= \sqrt{\varepsilon + \omega^2(\varepsilon)}$, we have in view 
of \eqref{crucialestimate}
\begin{align}
\nonumber\left|\langle e^{-\lambda_{\ep}/2}u_{\ep}, 
\dbar\varphi\wedge f\rangle_{\Omega\setminus\widetilde{\Omega}_{\varepsilon}}\right| 
&\lesssim \widetilde{\omega}(\varepsilon)\left(\int\limits_{\D}\sumprime_{|K|=q}\sum\limits_{j,k} 
\frac{\partial^2\lambda_{\ep}}{\partial z_{j}\partial\zb_{k}}
(e_{\lambda_{\varepsilon}/2}u_{\ep})_{jK}
\overline{(e^{-\lambda_{\varepsilon}/2}u_{\ep})}_{kK}\,dV\right)^{1/2} 
 \left(\int\limits_{\D}|f|^2dV\right)^{1/2}\\
\label{EQN40}&\lesssim\widetilde{\omega}(\varepsilon)\left(\int\limits_{\D}\sumprime_{|K|=q}\sum\limits_{j,k} 
\frac{\partial^2\lambda_{\ep}}{\partial z_{j}\partial\zb_{k}} (e^{-\lambda_{\varepsilon}/2}u_{\ep})_{jK}
\overline{(e^{-\lambda_{\varepsilon}/2}u_{\ep})}_{kK}\,dV +\int\limits_{\D}|f|^2dV\right)\\
\nonumber&\lesssim\widetilde{\omega}(\varepsilon)\left(\|\dbar^*(e^{-\lambda_{\ep}/2}u_{\ep})\|_{\D}^2  
+\|f\|_{\D}^2\right)\\
\nonumber &\lesssim \widetilde{\omega}(\varepsilon) \left(\|\dbar^* v\|^2_{\D}+\|f\|_{\D}^2\right) 
\lesssim \widetilde{\omega}(\varepsilon)\|f\|_{\D}^2\;. 
\end{align}
We have used here that the integrands are nonnegative, so that the integrals over $\Omega$ 
dominate those over $\Omega\setminus\widetilde{\Omega}_{\varepsilon}$; the second inequality 
is $|ab|\leq a^{2}+b^{2}$.

It remains to control 
$\langle e^{-\lambda_{\ep}/2}u_{\ep}, \dbar\varphi\wedge f\rangle_{\widetilde{\Omega}_{\varepsilon}}$. 
To this end, consider the following three linear operators: 
\vspace{-0.28in}
\begin{align*}
K_{1,\varepsilon}&: K^{2}_{(0,q)}(\Omega)\rightarrow \text{dom}(\overline{\partial})
\cap \text{dom}(\overline{\partial})^{*}\subset L^{2}_{(0,q)}(\Omega),\\
K_{2,\varepsilon}&: \text{dom}(\overline{\partial})\cap \text{dom}(\overline{\partial}^{*})
\subset L^{2}_{(0,q)}(\Omega)\rightarrow \text{dom}(\overline{\partial})\cap \text{dom}(\overline{\partial}^{*})
\subset L^{2}_{(0,q)}(\Omega),\\
K_{3,\varepsilon}&: \text{dom}(\overline{\partial})\cap \text{dom}(\overline{\partial}^{*})
\subset L^{2}_{(0,q)}(\Omega)\rightarrow L^{2}_{(0,q)}(\widetilde{\Omega}_{\varepsilon}), 
\end{align*}
defined as follows: 
$K_{1,\ep}f=N_{q}(\overline{\partial}\varphi\wedge f)$, 
$K_{2,\ep}v=e^{-\lambda_{\varepsilon}/2}(P_{q,\lambda_{\varepsilon}/2})(e^{\lambda_{\varepsilon}/2}v)$, 
and $K_{3,\ep}w=w|_{\widetilde{\Omega}_{\varepsilon}}$. Then $K_{1,\varepsilon}$ and 
$K_{2,\varepsilon}$ are continuous, and $K_{3,\varepsilon}$ is compact (by interior elliptic 
regularity, see for example Proposition 5.1.1 in \cite{ChenShawBook}, and the fact 
$W^{1}(\widetilde{\Omega}_{\varepsilon})\hookrightarrow L^{2}(\widetilde{\Omega}_{\varepsilon})$ 
is compact). Consequently, the operator 
$K_{\varepsilon} = K_{3,\varepsilon}\circ K_{2,\varepsilon}\circ K_{1,\varepsilon}$ is compact. We have
\begin{equation}\label{EQ53}
 \left|\langle e^{-\lambda_{\ep}/2}u_{\ep}, 
\dbar\varphi\wedge f\rangle_{\widetilde{\Omega}_{\varepsilon}}\right| 
= \left|\langle K_{\varepsilon}f, 
\overline{\partial}\varphi\wedge  f\rangle_{\widetilde{\Omega}_{\varepsilon}}\right| 
\lesssim \|K_{\varepsilon}f\|_{\widetilde{\Omega}_{\varepsilon}}\|f\|_{\Omega} 
\lesssim \|\varepsilon^{-1}K_{\varepsilon}f\|_{\widetilde{\Omega}_{\varepsilon}}^{2} 
+ \varepsilon^{2}\|f\|_{\Omega}^{2}\;;
\end{equation}
with the constants in the two inequalities being independent of $\varepsilon$ (and $f$). 
The second inequality is the usual small constant--large constant estimate.

To complete the proof of Theorem \ref{THEOREM1}, it now suffices to combine \eqref{norm2}, 
\eqref{EQ32}, \eqref{EQ34}, \eqref{EQN40}, and \eqref{EQ53} to obtain
\begin{equation}\label{final}
\|H^{q}_{\varphi}f\|^{2} \leq C\left((\widetilde{\omega}(\varepsilon) + \varepsilon^{2})\|f\|^{2} 
+ \|\varepsilon^{-1}K_{\varepsilon}f\|^{2}\right),
\end{equation}
where $C$ is independent of $\varepsilon$. Because $\varepsilon^{-1}K_{\varepsilon}$ is compact, 
and $\lim_{\varepsilon\rightarrow 0^{+}}(\widetilde{\omega}(\varepsilon)+\varepsilon^{2}) = 0$, 
we can rescale to obtain the required family of estimates \eqref{compest}.
\end{proof}

%%%%%%%%%%%%%%%%%%%%%%%%%%%%%%%%%%%%%%%%%%%%%%%%

\section{Proof of Corollary \ref{Fredholm}}\label{FredholmProof}

The properties of Fredholm operators we use can be found for example in \cite[Chapter XI]{ConwayBook}. 
We first note that we may assume that $\varphi\neq 0$ on $\overline{\Omega}$ also when $q=0$. 
Indeed, if $\varphi\neq 0$ on $b\Omega$, then there is $\widetilde{\varphi}\in C^{1}(\overline{\Omega})$ 
with $\widetilde{\varphi}\neq 0$ on $\overline{\Omega}$ and $\varphi - \widetilde{\varphi}$ compactly 
supported. Then $T^{0}_{\varphi} = T^{0}_{\widetilde{\varphi}} + T^{0}_{\varphi-\widetilde{\varphi}}$.
Since $f\mapsto(\varphi-\widetilde{\varphi})f$ is compact when $q=0$ the operator 
$T^{0}_{\varphi-\widetilde{\varphi}}$ is compact. Hence $T^{0}_{\varphi}$ and 
$T^{0}_{\widetilde{\varphi}}$ are simultaneously Fredholm or non-Fredholm, 
with equal indices in the first case.

So assume now that $\varphi\in C^{1}(\overline{\Omega})$ is nonvanishing on $\overline{\Omega}$, 
and $0\leq q\leq (n-1)$. Then $(1/\varphi)\in C^{1}(\overline{\Omega})$ and $(1/\varphi)$ is also 
holomorphic along varieties in the boundary. We have
\begin{equation}\label{Calkin1}
T^{q}_{\varphi}T^{q}_{(1/\varphi)} 
= T^{q}_{\varphi(1/\varphi)} - (H^{q}_{\overline{\varphi}})^{*}H^{q}_{(1/\varphi)}\;,
\end{equation}
and
\begin{equation}\label{Calkin2}
T^{q}_{(1/\varphi)}T^{q}_{\varphi} 
= T^{q}_{(1/\varphi)\varphi} - (H^{q}_{(\overline{1/\varphi})})^{*}H^{q}_{\varphi}\;,
\end{equation}
where $^{*}$ denotes the adjoint. These equations are the standard relations between (semi) 
commutators of Toeplitz operators and Hankel operators; they follow by direct computation. 
Of course, $T^{q}_{\varphi(1/\varphi)}=T^{q}_{(1/\varphi)\varphi}=\mathbb{I}$, the identity. 
By Theorem \ref{THEOREM1}, both $H^{q}_{(1/\varphi)}$ and $H^{q}_{\varphi}$ are compact. 
So \eqref{Calkin1} and \eqref{Calkin2} say that $T^{q}_{\varphi}$ is invertible modulo compact 
operators (with inverse in the Calkin algebra given by $T^{q}_{(1/\varphi)}$), that is , 
$T^{q}_{\varphi}$ is Fredholm.

It remains to see that $\ind(T^{q}_{\varphi})=0$. Because $\overline{\Omega}$ is simply connected, 
there is $\widehat{\varphi}\in C(\overline{\Omega})$ such that 
$\varphi = e^{\widehat{\varphi}}$ ($z\mapsto e^{z}$ is a covering map of $\mathbb{C}$ 
onto $\mathbb{C}\setminus \{0\}$). Moreover, $\widehat{\varphi}\in C^{1}(\overline{\Omega})$, 
and it is holomorphic along varieties in $b\Omega$. This is clear for a local branch of $\log\varphi$, 
and such a branch differs from $\widehat{\varphi}$ by a constant. We can now apply a standard 
homotopy argument. Namely, for all $t\in [0,1]$, $T^{q}_{e^{t\widehat{\varphi}}}$ is Fredholm 
(by Theorem \ref{THEOREM1}). Also, $t\mapsto T^{q}_{e^{t\widehat{\varphi}}}$ is continuous 
from $[0,1]$ to $\mathcal{B}(K^{2}_{(0,q)}(\Omega))$, the bounded operators on 
$K^{2}_{(0,q)}(\Omega)$ ($\varphi\mapsto T^{q}_{\varphi}$ is continuous form $C(\overline{\Omega})$ 
to $\mathcal{B}(K^{2}_{(0,q)}(\Omega)$). The index is continuous from the Fredholm operators 
with the topology inherited from $\mathcal{B}(K^{2}_{(0,q)}(\Omega))$ to $\mathbb{Z}$ 
(with the discrete topology). Thus $t\mapsto \ind(T^{q}_{e^{t\widehat{\varphi}}})$ is 
continuous from $[0,1]$ to $\mathbb{Z}$, and therefore is constant. We obtain 
$0 = \ind(T^{q}_{e^{t\widehat{\varphi}}})|_{t=0} = \ind(T^{q}_{e^{t\widehat{\varphi}}})|_{t=1} 
=\ind(T^{q}_{\varphi})$, 
and the proof of Corollary \ref{Fredholm} is complete.

%%%%%%%%%%%%%%%%%%%%%%%%%%%%%%%%%%%%%%%%%

\end{document}